\documentclass[11pt]{article}

\usepackage[T1]{fontenc}
\usepackage[utf8]{inputenc}
\usepackage{lmodern}
\usepackage{microtype}
\usepackage[a4paper,margin=1in]{geometry}
\usepackage{amsmath,amssymb,amsthm,mathtools}
\usepackage{array,booktabs,longtable}
\usepackage{graphicx}
\usepackage{enumitem}
\usepackage{hyperref}
\usepackage{xcolor}
\definecolor{mutedlink}{RGB}{45,58,84}

\hypersetup{
  colorlinks=true,
  linkcolor=mutedlink,
  citecolor=mutedlink,
  urlcolor=mutedlink,
  pdftitle={Numerical topology of the clique complex of the partition graph: Euler characteristic, clique counts, and sequence data},
  pdfauthor={Fedor B. Lyudogovskiy}
}

\setlist[enumerate]{itemsep=0.25em, topsep=0.35em, leftmargin=2.2em}
\setlist[itemize]{itemsep=0.25em, topsep=0.35em, leftmargin=2.0em}

\newtheorem{proposition}{Proposition}[section]
\newtheorem{problem}{Problem}[section]
\newtheorem{remark}{Remark}[section]
\newtheorem{definition}{Definition}[section]

\newcommand{\Par}{\operatorname{Par}}
\newcommand{\Cl}{\operatorname{Cl}}

\title{Numerical topology of the clique complex of the partition graph:\\[0.2em]
\large Euler characteristic, clique counts, and sequence data}
\author{Fedor B. Lyudogovskiy}
\date{}

\begin{document}
\maketitle

\begin{abstract}
We study the numerical topology of the clique complex
\[
K_n=\Cl(G_n),
\]
where $G_n$ is the partition graph on the set of integer partitions of $n$. Building on the previously established homotopy equivalence
\[
K_n \simeq \bigvee^{\,b_n} S^2,
\]
we shift the focus from qualitative topology to its numerical content. Our main objects are the Euler characteristic $\chi(K_n)$, the derived sequence $b_n=\chi(K_n)-1$, the clique counts $c_r(n)$, and several related maximal-simplex counts.

We develop two exact counting languages for the same invariant. The first is the direct clique-counting formula
\[
\chi(K_n)=\sum_{r\ge 1}(-1)^{r-1}c_r(n),
\]
which expresses Euler characteristic through clique counts in the partition graph. The second is a nerve-side formula arising from the canonical good cover by distinct full star- and full top-simplices, which yields
\[
\chi(K_n)=\chi(N_n),
\]
where $N_n$ is the corresponding nerve. We further use the classification of maximal simplices into star-, top-, and edge-type pieces to formulate a local-to-global counting framework based on local admissibility data and global deduplication.

The paper is primarily organizational and computational. It fixes a consistent counting dictionary, separates intrinsic global counts from auxiliary based counts, records exact data for the full main sequence package on $1\le n\le 25$, and extends the low-dimensional clique-count layer through $n=60$. We do not claim closed formulas for $\chi(K_n)$ or for the full family of clique counts. Rather, the paper provides a framework in which such questions can be studied systematically.
\end{abstract}

\bigskip
\noindent\textbf{Keywords:} partition graph, clique complex, Euler characteristic, integer partitions, clique counts, maximal simplices, nerve complex, numerical topology, integer sequences

\smallskip
\noindent\textbf{MSC 2020:} 05C30, 05A17, 55U10

\section{Introduction}

For each $n\ge 1$, let $G_n$ be the partition graph whose vertices are the integer partitions of $n$, with adjacency given by an elementary transfer of one cell between two parts, followed by reordering. Let
\[
K_n:=\Cl(G_n)
\]
be the clique complex of $G_n$.

The qualitative topology of $K_n$ has already been determined in \cite{Lyudogovskiy2026Homotopy}: for every $n\ge 1$,
\[
K_n \simeq \bigvee^{\,b_n} S^2,
\qquad
b_n=\chi(K_n)-1.
\]
In the present paper we use the homotopy theorem, the good-cover/nerve equivalence, the containment of every clique in a full star- or full top-simplex, and the classification of maximal simplices from \cite{Lyudogovskiy2026Homotopy} as established inputs and do not reprove them here. Thus the remaining task is numerical: once the homotopy type is known, the topology of $K_n$ is encoded by explicit integer invariants, first of all by the Euler characteristic $\chi(K_n)$ and the derived sequence
\[
b_n=\chi(K_n)-1.
\]

This paper organizes that numerical layer systematically. For general background on integer partitions and Ferrers-diagram language we refer to Andrews \cite{Andrews1976}; for standard background on clique complexes, nerve constructions, and combinatorial-algebraic topology we refer to Bj{\"o}rner \cite{Bjorner1995} and Kozlov \cite{Kozlov2008}. We study the Euler characteristic of $K_n$, the clique counts
\[
c_r(n)=\#\{\text{cliques of size }r\text{ in }G_n\},
\]
the associated simplex counts, and several distinct families of maximal-simplex and cover-side counts. The aim is not to reprove the wedge-of-spheres theorem, but to turn it into a clean counting framework and a coherent family of integer sequences.

Two counting languages are available. The first is the direct clique-counting identity
\[
\chi(K_n)=\sum_{r\ge 1}(-1)^{r-1}c_r(n),
\]
which expresses Euler characteristic through clique counts in the partition graph. The second comes from the canonical good cover by distinct full star- and full top-simplices: if $N_n$ denotes the nerve of that cover, then
\[
K_n\simeq N_n,
\qquad
\chi(K_n)=\chi(N_n).
\]
Accordingly, one may study the same invariant either through direct clique enumeration or through overlap data in the canonical cover.

A second key input is the classification of maximal simplices established in \cite{Lyudogovskiy2026Homotopy}. The maximal simplices of $K_n$ are precisely the full star-simplices, the full top-simplices, and a residual family of maximal edges. This leads to three related but distinct counting layers: counts of cover elements in the canonical cover, counts of genuine maximal simplices, and auxiliary based counts attached to local presentations before global deduplication.

The paper is therefore primarily organizational and computational. More precisely, it does the following:

\begin{enumerate}
\item we fix a consistent dictionary relating clique counts, simplex counts, Euler characteristic, reduced Euler characteristic, and the sequence $b_n$;
\item we isolate two exact frameworks for $\chi(K_n)$: the clique-counting framework and the nerve-counting framework;
\item we formulate a local-to-global counting scheme for maximal simplices, based on local admissibility data and global deduplication;
\item we separate systematically the intrinsic global counts from the auxiliary based counts that arise in computation;
\item we record exact numerical data for the full main sequence package on $1\le n\le 25$, extend the low-dimensional clique-count layer through $n=60$, and fix normalization conventions for the main sequence family.
\end{enumerate}

A central methodological point is that several closely related counting problems must be kept distinct. These include the Euler characteristic $\chi(K_n)$, the shifted quantity $b_n=\chi(K_n)-1$, the clique counts $c_r(n)$, the simplex counts $f_p(K_n)$, the counts of distinct maximal simplices, the counts of cover elements in the canonical cover, and the auxiliary based counts obtained before deduplication. These quantities are related but not interchangeable, and one purpose of the paper is to make those relationships explicit and the notation stable.

The paper is organized as follows. Section~2 fixes the counting conventions and the basic topology-to-counting dictionary. Section~3 presents the two exact formulas for the Euler characteristic: the direct clique formula and the nerve formula. Section~4 develops the maximal-simplex bookkeeping framework, with explicit separation between cover counts, maximal-simplex counts, and based counts. Section~5 records the numerical data and normalization conventions for the main family of sequences, including an extension of the low-dimensional clique-count layer through $n=60$. The final section summarizes the numerical picture and formulates several open problems concerning exact formulas, structural reductions, and sequence behaviour.

\section{Preliminaries and counting conventions}

For $n\ge 1$, let $G_n$ be the partition graph on $\Par(n)$, and let
\[
K_n:=\Cl(G_n)
\]
be its clique complex.

The qualitative topology of $K_n$ has already been established in the previous paper \cite{Lyudogovskiy2026Homotopy}: for every $n\ge 1$,
\[
K_n \simeq \bigvee^{\,b_n} S^2,
\qquad
b_n=\chi(K_n)-1.
\]
The emphasis is therefore numerical rather than qualitative: we study the Euler characteristic of $K_n$, the associated simplex and clique counts, the counting of maximal simplices, and the resulting integer sequences.

Throughout, the qualitative results established in \cite{Lyudogovskiy2026Homotopy} are treated as black-box inputs. We recall only the notation needed for this counting-oriented study. Standard partition terminology is used throughout; see, for example, \cite{Andrews1976}. For the simplicial and nerve-theoretic background behind the clique-complex and good-cover language, see \cite{Bjorner1995,Kozlov2008}.

\subsection{The partition graph and its clique complex}

The vertex set of $G_n$ is the set $\Par(n)$ of all partitions of $n$. Two partitions are adjacent if one is obtained from the other by an elementary transfer of one cell between two parts, followed by reordering.

The simplices of $K_n=\Cl(G_n)$ are exactly the cliques of $G_n$. This leads to two parallel counting conventions.

\begin{definition}
For $p\ge 0$, let
\[
f_p(n):=f_p(K_n)
\]
denote the number of $p$-dimensional simplices of $K_n$.

For $r\ge 1$, let
\[
c_r(n)
\]
denote the number of cliques of size $r$ in $G_n$.
\end{definition}

Thus
\[
c_r(n)=f_{r-1}(n)\qquad (r\ge 1).
\]
We shall use $c_r(n)$ as the primary notation, since the graph-theoretic origin of the complex remains visible in it, and switch to $f_p(n)$ only when the simplicial grading is more convenient.

\subsection{Euler characteristic and the sequence $b_n$}

We write
\[
\chi_n:=\chi(K_n).
\]
Since $K_n$ is finite, its Euler characteristic is given by
\[
\chi_n=\sum_{p\ge 0}(-1)^p f_p(n).
\]
Equivalently,
\[
\chi_n=\sum_{r\ge 1}(-1)^{r-1} c_r(n).
\]

We also define
\[
b_n:=\chi_n-1.
\]
By the wedge-of-spheres theorem proved earlier, $b_n$ is at the same time the number of $2$-spheres in the wedge decomposition of $K_n$, the rank of $\widetilde H_2(K_n)$, and the reduced Euler characteristic $\widetilde\chi(K_n)$. Accordingly, the problems of determining $\chi_n$ and $b_n$ are equivalent.

\subsection{Canonical star/top simplices}

We use the terminology of admissible transfers, star-simplices, and top-simplices from \cite{Lyudogovskiy2026Homotopy}. In particular, for $\lambda\vdash n$, a removable corner $c$ of $\lambda$, and an addable corner $a$, the notation $\lambda(c\to a)$ denotes the partition obtained by an admissible transfer from $c$ to $a$ in the sense of \cite{Lyudogovskiy2026Homotopy}. We do not revisit the local classification here; in the present paper we need only the associated full star- and full top-simplices.

Let $\lambda\vdash n$, let $c$ be a removable corner of $\lambda$, and let $a$ be an addable corner. Define
\[
A_{\max}(\lambda,c):=\{\,a: \lambda(c\to a)\text{ is admissible}\,\},
\]
and
\[
C_{\max}(\lambda,a):=\{\,c: \lambda(c\to a)\text{ is admissible}\,\}.
\]

The corresponding full star- and full top-simplices are
\[
\Sigma^{\star}_{\max}(\lambda,c)
:=
\{\lambda\}\cup\{\lambda(c\to a):a\in A_{\max}(\lambda,c)\},
\]
and
\[
\Sigma^{\top}_{\max}(\lambda,a)
:=
\{\lambda\}\cup\{\lambda(c\to a):c\in C_{\max}(\lambda,a)\}.
\]

Every star-simplex is contained in a full star-simplex, and every top-simplex is contained in a full top-simplex. Moreover, every clique of $G_n$ is contained in one of these two canonical families \cite[Section~3]{Lyudogovskiy2026Homotopy}.

\subsection{The canonical cover and the nerve}

Let
\[
C_n
\]
denote the family of all distinct full star-simplices and full top-simplices in $K_n$. Then
\[
K_n=\bigcup_{U\in C_n} U.
\]

Let
\[
N_n:=N(C_n)
\]
be the nerve of this cover. The previous paper proves that the union of the full star- and full top-simplices equals $K_n$. Since each cover element is a simplex and any non-empty intersection of two such simplices is again a simplex (hence contractible), $C_n$ is a good cover. Consequently,
\[
K_n\simeq N_n.
\]
We use this cover-and-nerve theorem throughout as an established input from \cite{Lyudogovskiy2026Homotopy}.

Therefore $\chi(K_n)$ may be computed in two ways: either directly from the simplex counts of $K_n$, or from the simplex counts of the nerve $N_n$.

\subsection{Distinct counts and based counts}

A central bookkeeping issue is that several natural counting problems must be kept separate.

First, there are the global simplex counts
\[
f_p(n)\quad\text{or equivalently}\quad c_r(n).
\]

Second, there are the numbers of distinct maximal simplices of $K_n$. The previous paper shows that the maximal simplices of $K_n$ are precisely:

\begin{enumerate}
\item full star-simplices $\Sigma^{\star}_{\max}(\lambda,c)$ with $|A_{\max}(\lambda,c)|\ge 2$;
\item full top-simplices $\Sigma^{\top}_{\max}(\lambda,a)$ with $|C_{\max}(\lambda,a)|\ge 2$;
\item certain maximal edges arising in the one-dimensional residual case.
\end{enumerate}

We denote the corresponding numbers of distinct maximal simplices by
\[
m_{\star}(n),\qquad m_{\top}(n),\qquad m_e(n).
\]

Third, there are the associated based counts, in which one counts presentations before identifying coincident simplices. Namely, we may define
\[
m_{\star}^{\mathrm{based}}(n)
:=
\#\{(\lambda,c): |A_{\max}(\lambda,c)|\ge 2\},
\]
and
\[
m_{\top}^{\mathrm{based}}(n)
:=
\#\{(\lambda,a): |C_{\max}(\lambda,a)|\ge 2\}.
\]

These based counts are often easier to extract from local transfer data, but in general they need not coincide with the numbers of distinct maximal simplices: different pairs $(\lambda,c)$ or $(\lambda,a)$ may determine the same simplex. Whenever we use a based count rather than a distinct count, this will be stated explicitly.

\subsection{Basic counting identities}

\begin{proposition}[Topology-to-counting dictionary]
For every $n\ge 1$, one has
\[
c_r(n)=f_{r-1}(n)\qquad (r\ge 1),
\]
\[
\chi_n=\sum_{p\ge 0}(-1)^p f_p(n)
      =\sum_{r\ge 1}(-1)^{r-1}c_r(n),
\]
and
\[
\chi_n=\chi(N_n).
\]
Moreover,
\[
b_n=\chi_n-1.
\]
\end{proposition}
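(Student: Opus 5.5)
The proposition is essentially a collection of definitional identities together with one imported topological input, so I will check the four assertions in turn.

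\emph{First identity.} By definition of the clique complex, the simplices of $K_n=\Cl(G_n)$ are exactly the non-empty cliques of $G_n$. A $p$-simplex has $p+1$ vertices, so the map sending a $p$-simplex to its vertex set is a bijection onto the cliques of size $p+1$. Setting $r=p+1$ gives $c_r(n)=f_{r-1}(n)$ for all $r\ge 1$.

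\emph{Euler characteristic formulas.} $K_n$ is a finite simplicial complex, since $\Par(n)$ is finite. Hence $\chi_n=\sum_{p\ge0}(-1)^pf_p(n)$ is the standard definition, and it agrees with the alternating sum of ranks of simplicial homology by the usual rank argument on the chain complex. Reindexing with $r=p+1$ and using the first identity turns $(-1)^p f_p(n)$ into $(-1)^{r-1}c_r(n)$. Both sums are finite because $c_r(n)=0$ once $r$ exceeds $|\Par(n)|$.

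\emph{Nerve identity.} Here I use the good-cover input recalled in the subsection on the canonical cover. The family $C_n$ of distinct full star- and top-simplices covers $K_n$, by \cite{Lyudogovskiy2026Homotopy}. Each member of $C_n$ is a full simplex. Any non-empty intersection of finitely many members is a face common to all of them, hence a simplex and contractible. The nerve theorem for finite covers by subcomplexes (see \cite{Bjorner1995,Kozlov2008}) then gives $K_n\simeq N_n$. Euler characteristic is a homotopy invariant of finite CW complexes, since it is determined by rational homology, so $\chi(K_n)=\chi(N_n)$.

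\emph{The relation $b_n=\chi_n-1$.} This is the definition of $b_n$. To confirm it matches the topological meaning, apply the wedge theorem $K_n\simeq\bigvee^{b_n}S^2$: the reduced homology is concentrated in degree $2$ with rank equal to the number of spheres, so $\chi_n=1+(\text{number of spheres})$.

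\emph{Main obstacle.} The only non-formal step is the nerve identity. It requires confirming that the intersection condition holds for all finite subfamilies of $C_n$, not only for pairs, so that the nerve theorem applies. I would handle this by the face argument above: an intersection of simplices of a simplicial complex, when non-empty, is the simplex spanned by their common vertices. Everything else is reindexing.
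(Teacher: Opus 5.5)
Your proposal is correct and follows the same route as the paper. The first two identities come from the definition of the clique complex and the standard Euler characteristic formula for a finite simplicial complex, the nerve identity comes from the imported good-cover input $K_n\simeq N_n$ together with homotopy invariance of $\chi$, and $b_n=\chi_n-1$ is fixed notation; your one addition, checking that every non-empty finite intersection of members of $C_n$ (not just each pairwise one) is a common face and hence contractible, usefully spells out the hypothesis the paper simply cites from the previous paper.
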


\begin{proof}
The identity $c_r(n)=f_{r-1}(n)$ is immediate from the definition of the clique complex. The Euler characteristic formulas are the standard ones for a finite simplicial complex. Finally, $K_n\simeq N_n$ by the good-cover theorem established earlier, so
\[
\chi(K_n)=\chi(N_n).
\]
The identity $b_n=\chi_n-1$ is the notation fixed above.
\end{proof}

\subsection{Conventions and scope}

Two indexing conventions coexist naturally in this subject:

\begin{itemize}
\item clique size $r$, leading to the sequences $c_r(n)$;
\item simplex dimension $p$, leading to the sequences $f_p(n)$.
\end{itemize}

In this paper the primary indexing will be by clique size. We also distinguish systematically between the Euler characteristic $\chi_n$, the reduced Euler characteristic $\widetilde\chi(K_n)=\chi_n-1$, the sequence $b_n=\chi_n-1$, distinct maximal-simplex counts, and based maximal-simplex counts.

In what follows, these conventions are used to state exact identities for $\chi_n$, to organize the maximal-simplex counting framework, and to present the computed sequence data.

\section{Euler characteristic via clique counts and the canonical nerve}

We now pass from the qualitative topology of $K_n$ to its numerical content. The decisive invariant is the Euler characteristic
\[
\chi_n:=\chi(K_n),
\]
because the previously established homotopy classification gives
\[
K_n \simeq \bigvee^{\,b_n} S^2,
\qquad
b_n=\chi_n-1.
\]
Thus every effective method for computing $\chi_n$ immediately yields the precise homotopy type of $K_n$.

In this section we record two complementary counting frameworks for $\chi_n$: the direct clique-counting framework on $G_n$, and the indirect nerve-counting framework coming from the canonical cover $C_n$.

\subsection{Euler characteristic from clique counts}

Since $K_n$ is a finite simplicial complex, its Euler characteristic is given by the alternating sum of its simplex numbers:
\[
\chi_n=\sum_{p\ge 0}(-1)^p f_p(n).
\]
Using the relation $c_r(n)=f_{r-1}(n)$, we obtain the equivalent clique-counting formula
\[
\chi_n=\sum_{r\ge 1}(-1)^{r-1}c_r(n).
\]

\begin{proposition}\label{prop:chi-clique-formula}
For every $n\ge 1$,
\[
\chi_n=\sum_{r\ge 1}(-1)^{r-1}c_r(n).
\]
Equivalently,
\[
b_n=\sum_{r\ge 1}(-1)^{r-1}c_r(n)-1.
\]
\end{proposition}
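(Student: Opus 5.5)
The plan is to reduce the statement to the standard alternating-sum formula for the Euler characteristic of a finite simplicial complex, and then re-index it from simplex dimension to clique size. First I will check finiteness. The vertex set $\Par(n)$ is finite, so $G_n$ has only finitely many cliques. Hence $K_n=\Cl(G_n)$ is a finite simplicial complex, and $c_r(n)=0$ for $r>|\Par(n)|$. In particular, every sum appearing in the statement has only finitely many nonzero terms.

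Second, I will invoke the definition of the Euler characteristic of a finite simplicial complex (equivalently, the alternating sum of ranks of the simplicial chain groups, which agrees with the homological definition):
\[
\chi_n=\sum_{p\ge 0}(-1)^p f_p(n).
\]
By the definition of the clique complex, the $p$-simplices of $K_n$ are exactly the cliques of $G_n$ with $p+1$ vertices. This gives $f_p(n)=c_{p+1}(n)$, which is the identity $c_r(n)=f_{r-1}(n)$ already recorded in the Topology-to-counting dictionary. Substituting $r=p+1$ turns $(-1)^p$ into $(-1)^{r-1}$, and this yields the first formula.

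Third, the formula for $b_n$ follows immediately by subtracting $1$, since $b_n:=\chi_n-1$ by definition. If one wants the topological meaning as well, the wedge decomposition $K_n\simeq\bigvee^{b_n}S^2$ from \cite{Lyudogovskiy2026Homotopy} identifies this quantity with $\operatorname{rank}\widetilde H_2(K_n)$. That identification is not needed for the identity itself.

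I do not expect a genuine obstacle. The only points needing care are the convention that the empty clique is not counted, which holds because $r\ge 1$ (counting it would give the reduced Euler characteristic instead), and the finiteness of the sums. Both are settled by the first step.
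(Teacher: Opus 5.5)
Your proposal is correct and matches the paper's proof. Both apply the standard alternating-sum formula $\chi_n=\sum_{p\ge 0}(-1)^p f_p(n)$ for the finite complex $K_n$, reindex via $c_r(n)=f_{r-1}(n)$, and get the $b_n$ version directly from the definition $b_n=\chi_n-1$; your remarks on finiteness and on excluding the empty clique are sound clarifications that the paper leaves implicit.
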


\begin{proof}
The first identity is just the Euler characteristic formula for the finite simplicial complex $K_n$, rewritten in clique language via $c_r(n)=f_{r-1}(n)$. The second is the definition $b_n=\chi_n-1$.
\end{proof}

Proposition~\ref{prop:chi-clique-formula} is basic but useful. It shows that the global homotopy type of $K_n$ is encoded by a single alternating sum of clique numbers in the partition graph $G_n$. Accordingly, any structural information on the sequences $c_r(n)$ feeds directly into the topology.

\subsection{Finite support and maximal clique size}

For each fixed $n$, the graph $G_n$ is finite, hence $K_n$ has only finitely many simplices. In particular, only finitely many clique numbers $c_r(n)$ are nonzero.

It is therefore convenient to introduce the maximal clique size
\[
\omega_n:=\omega(G_n)=\max\{\,r : c_r(n)\neq 0\,\}.
\]
Equivalently, $\omega_n-1$ is the dimension of the largest simplex of $K_n$.

Thus the Euler characteristic formula may be written more explicitly as
\[
\chi_n=\sum_{r=1}^{\omega_n}(-1)^{r-1}c_r(n).
\]

\begin{remark}
The quantity $\omega_n$ is not itself the main object of the present paper, but it is a useful bookkeeping parameter: it determines the length of the alternating sum defining $\chi_n$, and it separates the fixed-$r$ clique-counting regime from the maximal-dimension regime.
\end{remark}

\subsection{The nerve-side formula}

The second counting language comes from the canonical cover. Let $C_n$ be the canonical cover of $K_n$ by all distinct full star-simplices and full top-simplices, and let
\[
N_n:=N(C_n)
\]
be its nerve.

By the good-cover theorem established in the previous paper, one has
\[
K_n\simeq N_n.
\]
Since Euler characteristic is a homotopy invariant for finite complexes, it follows immediately that
\[
\chi_n=\chi(N_n).
\]

\begin{proposition}\label{prop:chi-nerve-formula}
For every $n\ge 1$,
\[
\chi_n=\chi(N_n).
\]
Equivalently,
\[
b_n=\chi(N_n)-1.
\]
\end{proposition}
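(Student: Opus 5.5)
The identity follows directly from the good-cover theorem. Euler characteristic is a homotopy invariant of finite CW complexes, so it is enough to know that $K_n\simeq N_n$ and that both spaces are finite.

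\emph{Finiteness.} $\Par(n)$ is finite, so $G_n$ is finite and $K_n=\Cl(G_n)$ is a finite simplicial complex. The cover $C_n$ consists of distinct full star- and full top-simplices. Each of these is indexed by a pair $(\lambda,c)$ or $(\lambda,a)$, and there are only finitely many such pairs. Hence $C_n$ is finite, and its nerve $N_n$ is a finite simplicial complex whose vertex set is $C_n$.

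\emph{Good cover.} As recalled in the subsection on the canonical cover, every clique lies in some element of $C_n$, so $K_n=\bigcup_{U\in C_n}U$. Each $U\in C_n$ is a full simplex of $K_n$. Any nonempty intersection of members of $C_n$ is the simplex spanned by their common vertices, because a face of a simplex is determined by its vertex set. Such an intersection is therefore contractible, and the cover is good. By the nerve theorem (for finite covers of a simplicial complex by subcomplexes; see \cite{Bjorner1995,Kozlov2008}), $K_n\simeq N_n$. This is exactly the established input from \cite{Lyudogovskiy2026Homotopy}, which may be cited directly.

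\emph{Conclusion.} Homotopy-equivalent finite complexes have isomorphic rational homology. Hence $\chi(K_n)=\sum_i(-1)^i\dim H_i(K_n;\mathbb Q)=\chi(N_n)$, which gives $\chi_n=\chi(N_n)$. The second form of the statement is then immediate from the definition $b_n=\chi_n-1$.

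The only point needing care is the good-cover step: intersections of members of $C_n$ must be subcomplexes that are genuinely simplices, i.e.\ full faces. This holds because each cover element is the full simplex on its vertex set, and the intersection of full simplices is the full simplex on the common vertices. Everything else is formal.
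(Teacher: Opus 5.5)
Your proposal is correct and follows the same route as the paper: the canonical cover $C_n$ is a good cover, so the nerve theorem gives $K_n\simeq N_n$, and homotopy invariance of the Euler characteristic for finite complexes yields $\chi_n=\chi(N_n)$, with $b_n=\chi(N_n)-1$ then following from $b_n=\chi_n-1$. Your additional remarks on finiteness and on intersections of full simplices being full simplices only make explicit what the paper takes as established input from the earlier work.
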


\begin{proof}
The canonical cover $C_n$ is a good cover of $K_n$, hence $K_n\simeq N_n$. Therefore
\[
\chi(K_n)=\chi(N_n).
\]
The second identity again follows from $b_n=\chi_n-1$.
\end{proof}

This gives an alternative route to the same invariant: instead of counting cliques in $G_n$, one may count simplices in the nerve $N_n$, that is, nonempty intersections of members of the canonical cover.

\subsection{Two counting languages for the same invariant}

\begin{proposition}\label{prop:two-languages}
For every $n\ge 1$,
\[
\chi_n
=
\sum_{r=1}^{\omega_n}(-1)^{r-1}c_r(n)
=
\chi(N_n).
\]
\end{proposition}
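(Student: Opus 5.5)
The statement simply combines Proposition~\ref{prop:chi-clique-formula} and Proposition~\ref{prop:chi-nerve-formula}, so I will prove it by chaining those two identities. The only additional point is truncating the infinite alternating sum at $\omega_n$.

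First, I will justify the finite form of the clique sum. By definition $\omega_n=\max\{r: c_r(n)\neq 0\}$, so $c_r(n)=0$ for all $r>\omega_n$. This maximum exists because $G_n$ is finite (it has $|\Par(n)|$ vertices), so $c_r(n)=0$ once $r>|\Par(n)|$. It is also at least $1$, since $c_1(n)=|\Par(n)|\ge 1$. Hence
\[
\sum_{r\ge 1}(-1)^{r-1}c_r(n)=\sum_{r=1}^{\omega_n}(-1)^{r-1}c_r(n),
\]
and Proposition~\ref{prop:chi-clique-formula} gives that this equals $\chi_n$.

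Second, Proposition~\ref{prop:chi-nerve-formula} gives $\chi_n=\chi(N_n)$. This rests on the good-cover property of the canonical cover $C_n$, taken as input from \cite{Lyudogovskiy2026Homotopy}, together with homotopy invariance of the Euler characteristic. For the latter to apply, $N_n$ must be a finite complex. It is: $C_n$ consists of finitely many distinct simplices of the finite complex $K_n$, so $N_n$ has finitely many vertices.

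Combining the two displayed equalities yields the chain in the statement. There is no genuine obstacle. The only care needed is to confirm the finiteness conditions: that the sum truncates at $\omega_n$, and that $N_n$ is finite so that $\chi(N_n)$ is defined and homotopy invariance applies. The substantive topological content, namely the good-cover and nerve equivalence, is imported rather than reproved.
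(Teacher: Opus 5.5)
Your proposal is correct and follows the paper's proof, which simply combines Propositions~\ref{prop:chi-clique-formula} and~\ref{prop:chi-nerve-formula}. Your extra checks are details the paper leaves implicit or handles in the surrounding text: that the sum truncates at $\omega_n$, and that $N_n$ is a finite complex so homotopy invariance of $\chi$ applies.
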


\begin{proof}
Combine Propositions~\ref{prop:chi-clique-formula} and~\ref{prop:chi-nerve-formula}.
\end{proof}

The significance of Proposition~\ref{prop:two-languages} is methodological: the same invariant may be approached in two genuinely different ways. One proceeds on the level of all cliques in the graph $G_n$, while the other proceeds on the level of intersections in the canonical cover. The first language is closer to the original graph structure and leads naturally to the sequences $c_r(n)$. The second is closer to the canonical decomposition of clique geometry into star- and top-mechanisms, and is potentially better suited to computations based on maximal simplices and intersection data.

\subsection{Alternating cancellation and numerical organization}

Formula
\[
\chi_n=\sum_{r=1}^{\omega_n}(-1)^{r-1}c_r(n)
\]
shows that the Euler characteristic is produced by alternating cancellation among clique counts of different sizes; no single simplex dimension governs it in isolation.

This suggests three distinct numerical levels:

\begin{enumerate}
\item the raw clique counts $c_r(n)$;
\item the derived Euler characteristic $\chi_n$;
\item the shifted invariant $b_n=\chi_n-1$, which directly records the wedge multiplicity.
\end{enumerate}

From the viewpoint of integer sequences, these levels should be kept separate. The sequence $b_n$ is topologically primary, but it is not combinatorially primitive: it is derived from the full simplex-counting profile of $K_n$. Conversely, the sequences $c_r(n)$ are combinatorially more basic, but no single one of them determines the homotopy type on its own.

\subsection{Consequences for the rest of the paper}

The remaining sections pursue these two counting frameworks in parallel.

On the clique side, we study the families $c_r(n)$, total simplex counts, and maximal simplex counts. On the cover side, we use the canonical star/top description to organize maximal simplices and to compare distinct counts with based counts arising from local transfer data.

The computational part of the paper is therefore an integral part of the counting framework. Its purpose is to make the identities above effective and to extract from them clean numerical sequences suitable for further study.

\section{Maximal simplices and counting frameworks}

The Euler characteristic identities of Section~3 reduce the topology of $K_n$ to counting. A direct enumeration of all cliques in $G_n$ is one exact route, but it is not the only one. The structural results proved in \cite{Lyudogovskiy2026Homotopy} provide a second route: one may organize simplex counting through the canonical star/top mechanisms and the corresponding families of full simplices.

This section organizes the relevant counting problems. Several closely related layers occur here, and they must be separated carefully: cover elements versus maximal simplices, distinct counts versus based counts, and intrinsic clique counts versus auxiliary local presentation counts.

\subsection{Maximal simplices and cover elements}

Recall that for $\lambda\vdash n$, a removable corner $c$, and an addable corner $a$, the full star- and full top-simplices are
\[
\Sigma^{\star}_{\max}(\lambda,c)
=
\{\lambda\}\cup\{\lambda(c\to b): b\in A_{\max}(\lambda,c)\},
\]
and
\[
\Sigma^{\top}_{\max}(\lambda,a)
=
\{\lambda\}\cup\{\lambda(d\to a): d\in C_{\max}(\lambda,a)\}.
\]
These are exactly the full simplices used in the canonical cover
\[
C_n
\]
of $K_n$.

The earlier paper proves that for $n\ge 2$ the maximal simplices of $K_n$ are precisely:
\begin{enumerate}
\item the full star-simplices $\Sigma^{\star}_{\max}(\lambda,c)$ with $|A_{\max}(\lambda,c)|\ge 2$;
\item the full top-simplices $\Sigma^{\top}_{\max}(\lambda,a)$ with $|C_{\max}(\lambda,a)|\ge 2$;
\item the edges $\{\lambda,\lambda(c\to a)\}$ such that
\[
|A_{\max}(\lambda,c)|=1,
\qquad
|C_{\max}(\lambda,a)|=1.
\]
\end{enumerate}
The case $n=1$ is trivial: $K_1$ consists of a single maximal vertex. For $n\ge 2$, every simplex of $K_n$ is contained in a full star-simplex or in a full top-simplex, and hence in a maximal simplex \cite[Lemma~3.13]{Lyudogovskiy2026Homotopy}.

Thus two distinct families appear naturally:
\begin{itemize}
\item the family $C_n$ of all distinct full star- and full top-simplices, which serves as the cover for the nerve construction;
\item the family of genuine maximal simplices of $K_n$, consisting of the maximal full star-simplices, the maximal full top-simplices, and the residual maximal edges.
\end{itemize}
These families are related but not identical.

\subsection{Distinct global counts}

We now attach numerical invariants to these families.

For the canonical cover, write
\[
\begin{aligned}
\nu_{\star}(n)&:=\#\{\text{distinct full star-simplices in }C_n\},\\
\nu_{\top}(n)&:=\#\{\text{distinct full top-simplices in }C_n\},
\end{aligned}
\]
and define
\[
\nu_C(n):=|C_n|.
\]
Thus $\nu_C(n)$ is the number of vertices of the nerve $N_n=N(C_n)$.

In general, $\nu_C(n)$ need not equal $\nu_{\star}(n)+\nu_{\top}(n)$. A full simplex may occur simultaneously as a full star-simplex and as a full top-simplex, so the cover level must be counted after taking distinct vertex sets. For $n=1$ we use the convention that the unique vertex of $K_1$ is counted as a single coincident full star-simplex and full top-simplex; accordingly,
\[
\nu_{\star}(1)=\nu_{\top}(1)=\nu_C(1)=1.
\]
At the maximal-simplex level one has
\[
m_{\star}(1)=m_{\top}(1)=m_e(1)=0,
\qquad
m_{\max}(1)=1.
\]

For genuine maximal simplices, we write
\[
m_{\star}(n),\qquad m_{\top}(n),\qquad m_e(n)
\]
for the numbers of distinct maximal simplices of star-type, top-type, and edge-type, respectively. If needed, we also write
\[
m_{\max}(n)
\]
for the total number of distinct maximal simplices of $K_n$, computed after global deduplication across the union of all maximal star-, top-, and edge-presentations. In particular,
\[
m_{\max}(1)=1,
\]
because $K_1$ is a single vertex.

\begin{remark}
The counts $\nu_{\star}(n),\nu_{\top}(n),\nu_C(n)$ and the counts $m_{\star}(n),m_{\top}(n),m_e(n)$ live on different levels. The former belong to the cover-and-nerve language; the latter belong to the intrinsic maximal-simplex structure of $K_n$. In particular, residual maximal edges contribute to $m_e(n)$ but not to $\nu_C(n)$, while a full star- or full top-simplex may contribute to $\nu_C(n)$ without being maximal in the whole complex.
\end{remark}

This distinction is important numerically: cover counts, maximal-simplex counts, and clique counts should not be conflated.

\subsection{Based presentations and deduplication}

For actual computation it is often easier to count local presentations first and only then pass to distinct global objects.

Define the based star and top counts by
\[
m_{\star}^{\mathrm{based}}(n)
:=
\#\{(\lambda,c): \lambda\vdash n,\ c\text{ removable in }\lambda,\ |A_{\max}(\lambda,c)|\ge 2\},
\]
and
\[
m_{\top}^{\mathrm{based}}(n)
:=
\#\{(\lambda,a): \lambda\vdash n,\ a\text{ addable in }\lambda,\ |C_{\max}(\lambda,a)|\ge 2\}.
\]
For the residual one-dimensional case, define the based edge count by
\[
m_{e}^{\mathrm{based}}(n)
:=
\#\{(\lambda,c,a): \lambda(c\to a)\text{ admissible},\ |A_{\max}(\lambda,c)|=1,\ |C_{\max}(\lambda,a)|=1\}.
\]
Here the counting convention is oriented: a presentation $(\lambda,c,a)$ and the reversed presentation based at $\lambda(c\to a)$ are counted separately whenever both occur.

The corresponding cover-element counts also admit based versions if needed, namely by counting pairs $(\lambda,c)$ and $(\lambda,a)$ without imposing maximality. We shall not introduce separate notation for them here, because the main bookkeeping issue in the present paper concerns maximal simplices.

\begin{remark}
Based counts need not coincide with distinct counts. Different pairs $(\lambda,c)$ may generate the same full star-simplex, and different pairs $(\lambda,a)$ may generate the same full top-simplex. Likewise, a single undirected maximal edge may have two oriented based presentations. Hence the passage from based counts to distinct counts is a genuine deduplication step.
\end{remark}

Accordingly, based counts are local-combinatorial quantities, whereas the distinct counts $\nu_{\star}(n),\nu_{\top}(n),m_{\star}(n),m_{\top}(n),m_e(n)$ are global-combinatorial quantities.

\subsection{Local-to-global counting of cliques}

The same distinction persists for arbitrary simplices, not only for maximal ones. Since every simplex of $K_n$ is contained in a full star-simplex or a full top-simplex, every clique counted by $c_r(n)$ arises inside at least one canonical container.

This suggests two auxiliary counting layers for each $r\ge 1$:
\[
c_{r}^{\star,\mathrm{based}}(n),\qquad c_{r}^{\top,\mathrm{based}}(n),
\]
where an $r$-clique is counted together with a chosen star-base $(\lambda,c)$ or top-base $(\lambda,a)$. These are local presentation counts, not intrinsic invariants.

The intrinsic quantity remains
\[
c_r(n),
\]
the number of distinct $r$-cliques in $G_n$. Passing from the based counts to $c_r(n)$ again requires global identification of equal vertex sets.

\begin{proposition}\label{prop:counting-framework}
For each $n\ge 1$, the simplex counts of $K_n$ can be recovered by a finite local-to-global counting scheme based on the admissibility sets $A_{\max}(\lambda,c)$ and $C_{\max}(\lambda,a)$.

More precisely:
\begin{enumerate}
\item the based maximal counts
\[
m_{\star}^{\mathrm{based}}(n),\quad m_{\top}^{\mathrm{based}}(n),\quad m_e^{\mathrm{based}}(n)
\]
are determined directly by the local admissibility data;
\item the distinct maximal counts
\[
m_{\star}(n),\quad m_{\top}(n),\quad m_e(n)
\]
are obtained from the based presentations by deduplication of coincident vertex sets;
\item for every $r\ge 1$, the clique count $c_r(n)$ is obtained by enumerating all $r$-vertex subsimplices of the full star- and full top-simplices and then deduplicating globally.
\end{enumerate}
\end{proposition}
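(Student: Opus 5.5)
The plan is to verify the three items in order. Each one reduces to a black-box structural input from \cite{Lyudogovskiy2026Homotopy} together with the fact that all objects involved are finite.

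First I would fix the local data. For each $\lambda\vdash n$, the removable corners $c$ and addable corners $a$ form finite sets. Deciding whether the transfer $\lambda(c\to a)$ is admissible is a finite check on the pair of corners. Hence the sets $A_{\max}(\lambda,c)$ and $C_{\max}(\lambda,a)$ are computable finite sets, and $\Par(n)$ itself is finite.

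For item~1, the based counts are defined in Section~4.3 as cardinalities of sets of pairs $(\lambda,c)$, $(\lambda,a)$ or triples $(\lambda,c,a)$. Membership in these sets is decided solely by the numbers $|A_{\max}(\lambda,c)|$ and $|C_{\max}(\lambda,a)|$. So item~1 is immediate once these local sets are known.

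For item~2, I would form the finite list of vertex sets
\[
\Sigma^{\star}_{\max}(\lambda,c),\qquad \Sigma^{\top}_{\max}(\lambda,a),\qquad \{\lambda,\lambda(c\to a)\}
\]
over the based presentations. I would then pass to the set of distinct vertex sets within each type. The cited classification of maximal simplices for $n\ge 2$ says this list consists exactly of the maximal simplices of each type, so its distinct members are counted by $m_\star(n)$, $m_\top(n)$ and $m_e(n)$. For $n=1$ I would use the convention fixed in Section~4.2. Global deduplication across all three types then gives $m_{\max}(n)$.

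For item~3, I would take the union over all full star- and full top-simplices of their $r$-element subsets. Since each full simplex is a clique, every such subset is a clique and hence an $r$-clique of $G_n$. Conversely, by \cite[Lemma~3.13]{Lyudogovskiy2026Homotopy}, every clique lies in some full star- or full top-simplex, so every $r$-clique arises this way. The set of distinct $r$-subsets obtained is therefore exactly the set of $r$-cliques, and its cardinality is $c_r(n)=f_{r-1}(n)$. Combining this with Proposition~\ref{prop:chi-clique-formula} also gives $\chi_n$.

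The only substantive point, and hence the main obstacle, is the containment statement in item~3 and the classification used in item~2. Both are imported from the earlier paper rather than proved here. The remaining work is to make sure the deduplication is done on vertex sets, not on presentations, so that no clique is counted twice.
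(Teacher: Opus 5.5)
Your proposal is correct and follows essentially the same route as the paper: item~1 by definition, item~2 by representing each distinct maximal simplex through its based presentations and identifying presentations with equal vertex sets, and item~3 via the imported containment of every clique in a full star- or full top-simplex followed by global deduplication. You are slightly more explicit than the paper in stating the reverse inclusion (every $r$-subset of a full simplex is itself a clique) and in handling $n=1$, both of which the paper's proof leaves tacit.
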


\begin{proof}
The first statement follows directly from the definitions. For the second, each distinct maximal simplex of star-, top-, or edge-type is represented by at least one based presentation, and two presentations determine the same distinct maximal simplex exactly when they yield the same vertex set. For the third, the clique-classification results recalled from \cite{Lyudogovskiy2026Homotopy} imply that every simplex of $K_n$ lies in a full star-simplex or a full top-simplex. Hence every $r$-clique appears as an $r$-vertex subsimplex of at least one such full simplex, and after quotienting by equality of vertex sets one obtains exactly the set of distinct $r$-cliques of $G_n$.
\end{proof}

Proposition~\ref{prop:counting-framework} is algorithmic rather than closed-form. Its role is to isolate the interface between local data and global enumeration.

\subsection{Consequences for sequence layers}

The discussion above singles out four sequence layers.

\begin{enumerate}
\item \emph{Topological sequences:}
\[
\chi_n,\qquad b_n.
\]

\item \emph{Intrinsic clique and maximal-simplex sequences:}
\[
c_r(n),\qquad m_{\star}(n),\qquad m_{\top}(n),\qquad m_e(n).
\]

\item \emph{Cover sequences:}
\[
\nu_{\star}(n),\qquad \nu_{\top}(n),\qquad \nu_C(n).
\]
These belong to the nerve-side language.

\item \emph{Auxiliary based sequences:}
\[
m_{\star}^{\mathrm{based}}(n),\qquad m_{\top}^{\mathrm{based}}(n),\qquad m_e^{\mathrm{based}}(n),
\]
and, if desired,
\[
c_{r}^{\star,\mathrm{based}}(n),\qquad c_{r}^{\top,\mathrm{based}}(n).
\]
These describe the pre-deduplication layer of the computation.
\end{enumerate}

Only the first three layers consist of distinct global invariants. The fourth is algorithmically useful, but auxiliary. We maintain this separation throughout the computational section.

\section{Computational data and sequence organization}

The identities established in Sections~2--4 are exact for all $n$. We now record the corresponding numerical data. The tables below give exact values for the full numerical package in the range $1\le n\le 25$ and extend the low-dimensional clique counts to $1\le n\le 60$ in order to track changes in the dominant clique size. They also indicate which parts of the numerical layer come directly from global clique enumeration and which come from local star/top bookkeeping followed by deduplication.

Throughout, we distinguish carefully between exact formulas valid for all $n$ and values obtained by finite computation over the ranges $1\le n\le 25$ and $1\le n\le 60$, as specified in each table.

\subsection{Primary topological data}

The two most fundamental numerical sequences are
\[
\chi_n:=\chi(K_n)
\qquad\text{and}\qquad
b_n:=\chi_n-1.
\]
They are topologically primary because $b_n$ is exactly the wedge multiplicity in
\[
K_n \simeq \bigvee^{\,b_n} S^2.
\]

Table~\ref{tab:chi-bn-25} records the exact values of $\chi_n$ and $b_n$ for $1\le n\le 25$. In particular, $K_1$ consists of a single vertex and is contractible. For $2\le n\le 7$ one has $b_n=0$, so $K_n$ is contractible as well (equivalently, $\bigvee^0 S^2$ is interpreted as a point). The first nontrivial case is
\[
K_8\simeq S^2.
\]

\begin{table}[ht]
\centering
\small
\renewcommand{\arraystretch}{1.08}
\setlength{\tabcolsep}{6pt}
\begin{tabular}{rrr}
\toprule
$n$ & $\chi_n$ & $b_n$ \\
\midrule
1 & 1 & 0 \\
2 & 1 & 0 \\
3 & 1 & 0 \\
4 & 1 & 0 \\
5 & 1 & 0 \\
6 & 1 & 0 \\
7 & 1 & 0 \\
8 & 2 & 1 \\
9 & 3 & 2 \\
10 & 6 & 5 \\
11 & 11 & 10 \\
12 & 20 & 19 \\
13 & 33 & 32 \\
14 & 56 & 55 \\
15 & 88 & 87 \\
16 & 138 & 137 \\
17 & 208 & 207 \\
18 & 311 & 310 \\
19 & 452 & 451 \\
20 & 653 & 652 \\
21 & 922 & 921 \\
22 & 1294 & 1293 \\
23 & 1788 & 1787 \\
24 & 2454 & 2453 \\
25 & 3325 & 3324 \\
\bottomrule
\end{tabular}
\caption{Exact values of $\chi_n=\chi(K_n)$ and $b_n=\chi_n-1$ for $1\le n\le 25$.}
\label{tab:chi-bn-25}
\end{table}

\subsection{Fixed-$r$ clique-count sequences}

Beyond $\chi_n$ and $b_n$, the next natural layer consists of the clique-count sequences
\[
c_r(n)\qquad (r\ge 1),
\]
where $c_r(n)$ is the number of cliques of size $r$ in $G_n$, equivalently the number of $(r-1)$-simplices in $K_n$.

Table~\ref{tab:clique-counts-1-25} records the full clique profiles in the computed range. The first cases have the concrete interpretations
\[
c_1(n)=p(n),
\qquad
c_2(n)=\#\{\text{edges of }G_n\},
\qquad
c_3(n)=\#\{\text{triangles of }G_n\},
\]
and so on.

The data make visible the first occurrences of higher-dimensional simplices:
\[
c_4(n)>0\ \text{first at }n=7,\qquad
c_5(n)>0\ \text{first at }n=11,
\]
\[
c_6(n)>0\ \text{first at }n=16,\qquad
c_7(n)>0\ \text{first at }n=22.
\]
Equivalently, the maximal clique size $\omega_n$ increases at
\[
n=2,4,7,11,16,22
\]
within the computed range.

The Euler characteristic is recovered from the full clique profile by
\[
\chi_n=\sum_{r\ge 1}(-1)^{r-1}c_r(n),
\]
and the last column of Table~\ref{tab:clique-counts-1-25} records the resulting values of $\chi_n$.

\setlength{\LTleft}{\fill}
\setlength{\LTright}{\fill}
\begin{longtable}{rrrrrrrrrr}
\caption{Exact clique counts for $1\le n\le 25$. For each $n$, the table records the full clique profile $(c_r(n))_{r\ge 1}$ in the computed range, together with the maximal clique size $\omega_n$ and the resulting Euler characteristic $\chi_n=\sum_{r\ge 1}(-1)^{r-1}c_r(n)$. Blank clique sizes do not occur; they are recorded here as $0$.}\label{tab:clique-counts-1-25}\\
\toprule
$n$ & $c_1$ & $c_2$ & $c_3$ & $c_4$ & $c_5$ & $c_6$ & $c_7$ & $\omega_n$ & $\chi_n$ \\
\midrule
\endfirsthead
\multicolumn{10}{c}{\tablename\ \thetable\ -- continued from previous page}\\
\toprule
$n$ & $c_1$ & $c_2$ & $c_3$ & $c_4$ & $c_5$ & $c_6$ & $c_7$ & $\omega_n$ & $\chi_n$ \\
\midrule
\endhead
\midrule
\multicolumn{10}{r}{continued on next page}\\
\endfoot
\bottomrule
\endlastfoot
1 & 1 & 0 & 0 & 0 & 0 & 0 & 0 & 1 & 1 \\
2 & 2 & 1 & 0 & 0 & 0 & 0 & 0 & 2 & 1 \\
3 & 3 & 2 & 0 & 0 & 0 & 0 & 0 & 2 & 1 \\
4 & 5 & 5 & 1 & 0 & 0 & 0 & 0 & 3 & 1 \\
5 & 7 & 9 & 3 & 0 & 0 & 0 & 0 & 3 & 1 \\
6 & 11 & 17 & 7 & 0 & 0 & 0 & 0 & 3 & 1 \\
7 & 15 & 28 & 15 & 1 & 0 & 0 & 0 & 4 & 1 \\
8 & 22 & 47 & 29 & 2 & 0 & 0 & 0 & 4 & 2 \\
9 & 30 & 73 & 52 & 6 & 0 & 0 & 0 & 4 & 3 \\
10 & 42 & 114 & 90 & 12 & 0 & 0 & 0 & 4 & 6 \\
11 & 56 & 170 & 149 & 25 & 1 & 0 & 0 & 5 & 11 \\
12 & 77 & 253 & 239 & 45 & 2 & 0 & 0 & 5 & 20 \\
13 & 101 & 365 & 374 & 82 & 5 & 0 & 0 & 5 & 33 \\
14 & 135 & 525 & 572 & 137 & 11 & 0 & 0 & 5 & 56 \\
15 & 176 & 738 & 857 & 229 & 22 & 0 & 0 & 5 & 88 \\
16 & 231 & 1033 & 1264 & 364 & 41 & 1 & 0 & 6 & 138 \\
17 & 297 & 1422 & 1835 & 574 & 74 & 2 & 0 & 6 & 208 \\
18 & 385 & 1948 & 2628 & 876 & 127 & 5 & 0 & 6 & 311 \\
19 & 490 & 2634 & 3719 & 1326 & 213 & 10 & 0 & 6 & 452 \\
20 & 627 & 3545 & 5205 & 1959 & 346 & 21 & 0 & 6 & 653 \\
21 & 792 & 4721 & 7210 & 2871 & 550 & 38 & 0 & 6 & 922 \\
22 & 1002 & 6259 & 9898 & 4133 & 855 & 70 & 1 & 7 & 1294 \\
23 & 1255 & 8227 & 13471 & 5902 & 1308 & 119 & 2 & 7 & 1788 \\
24 & 1575 & 10767 & 18190 & 8312 & 1965 & 202 & 5 & 7 & 2454 \\
25 & 1958 & 13990 & 24384 & 11621 & 2912 & 328 & 10 & 7 & 3325 \\
\end{longtable}

Define the low-dimensional leader profile by
\[
L_{\le 5}(n):=\{\,r\in\{1,\dots,5\}: c_r(n)=\max_{1\le s\le 5}c_s(n)\,\}.
\]
Table~\ref{tab:clique-counts-1-60-summary} extends the low-dimensional clique counts through $n=60$ and marks in bold the rowwise maxima among the displayed values $c_1(n),\dots,c_5(n)$.

\setlength{\LTleft}{\fill}
\setlength{\LTright}{\fill}
\begin{longtable}{rrrrrrrr}
\caption{Low-dimensional clique counts for $1\le n\le 60$. The entries in bold mark the rowwise maxima among the displayed counts $c_1(n),\dots,c_5(n)$. The final column records the corresponding leader profile $L_{\le 5}(n)$.}\label{tab:clique-counts-1-60-summary}\\
\toprule
$n$ & $c_1$ & $c_2$ & $c_3$ & $c_4$ & $c_5$ & $\omega_n$ & $L_{\le 5}(n)$ \\
\midrule
\endfirsthead
\multicolumn{8}{c}{\tablename\ \thetable\ -- continued from previous page}\\
\toprule
$n$ & $c_1$ & $c_2$ & $c_3$ & $c_4$ & $c_5$ & $\omega_n$ & $L_{\le 5}(n)$ \\
\midrule
\endhead
\midrule
\multicolumn{8}{r}{continued on next page}\\
\endfoot
\bottomrule
\endlastfoot
1 & \textbf{1} & 0 & 0 & 0 & 0 & 1 & $\{1\}$ \\
2 & \textbf{2} & 1 & 0 & 0 & 0 & 2 & $\{1\}$ \\
3 & \textbf{3} & 2 & 0 & 0 & 0 & 2 & $\{1\}$ \\
4 & \textbf{5} & \textbf{5} & 1 & 0 & 0 & 3 & $\{1,2\}$ \\
5 & 7 & \textbf{9} & 3 & 0 & 0 & 3 & $\{2\}$ \\
6 & 11 & \textbf{17} & 7 & 0 & 0 & 3 & $\{2\}$ \\
7 & 15 & \textbf{28} & 15 & 1 & 0 & 4 & $\{2\}$ \\
8 & 22 & \textbf{47} & 29 & 2 & 0 & 4 & $\{2\}$ \\
9 & 30 & \textbf{73} & 52 & 6 & 0 & 4 & $\{2\}$ \\
10 & 42 & \textbf{114} & 90 & 12 & 0 & 4 & $\{2\}$ \\
11 & 56 & \textbf{170} & 149 & 25 & 1 & 5 & $\{2\}$ \\
12 & 77 & \textbf{253} & 239 & 45 & 2 & 5 & $\{2\}$ \\
13 & 101 & 365 & \textbf{374} & 82 & 5 & 5 & $\{3\}$ \\
14 & 135 & 525 & \textbf{572} & 137 & 11 & 5 & $\{3\}$ \\
15 & 176 & 738 & \textbf{857} & 229 & 22 & 5 & $\{3\}$ \\
16 & 231 & 1033 & \textbf{1264} & 364 & 41 & 6 & $\{3\}$ \\
17 & 297 & 1422 & \textbf{1835} & 574 & 74 & 6 & $\{3\}$ \\
18 & 385 & 1948 & \textbf{2628} & 876 & 127 & 6 & $\{3\}$ \\
19 & 490 & 2634 & \textbf{3719} & 1326 & 213 & 6 & $\{3\}$ \\
20 & 627 & 3545 & \textbf{5205} & 1959 & 346 & 6 & $\{3\}$ \\
21 & 792 & 4721 & \textbf{7210} & 2871 & 550 & 6 & $\{3\}$ \\
22 & 1002 & 6259 & \textbf{9898} & 4133 & 855 & 7 & $\{3\}$ \\
23 & 1255 & 8227 & \textbf{13471} & 5902 & 1308 & 7 & $\{3\}$ \\
24 & 1575 & 10767 & \textbf{18190} & 8312 & 1965 & 7 & $\{3\}$ \\
25 & 1958 & 13990 & \textbf{24384} & 11621 & 2912 & 7 & $\{3\}$ \\
26 & 2436 & 18105 & \textbf{32465} & 16064 & 4257 & 7 & $\{3\}$ \\
27 & 3010 & 23286 & \textbf{42947} & 22059 & 6150 & 7 & $\{3\}$ \\
28 & 3718 & 29837 & \textbf{56479} & 30010 & 8784 & 7 & $\{3\}$ \\
29 & 4565 & 38028 & \textbf{73856} & 40578 & 12424 & 8 & $\{3\}$ \\
30 & 5604 & 48297 & \textbf{96070} & 54438 & 17402 & 8 & $\{3\}$ \\
31 & 6842 & 61053 & \textbf{124344} & 72629 & 24169 & 8 & $\{3\}$ \\
32 & 8349 & 76926 & \textbf{160181} & 96243 & 33291 & 8 & $\{3\}$ \\
33 & 10143 & 96524 & \textbf{205420} & 126894 & 45514 & 8 & $\{3\}$ \\
34 & 12310 & 120746 & \textbf{262323} & 166322 & 61780 & 8 & $\{3\}$ \\
35 & 14883 & 150487 & \textbf{333631} & 216996 & 83311 & 8 & $\{3\}$ \\
36 & 17977 & 187019 & \textbf{422690} & 281640 & 111634 & 8 & $\{3\}$ \\
37 & 21637 & 231643 & \textbf{533556} & 364001 & 148714 & 9 & $\{3\}$ \\
38 & 26015 & 286152 & \textbf{671137} & 468266 & 197000 & 9 & $\{3\}$ \\
39 & 31185 & 352413 & \textbf{841352} & 600065 & 259596 & 9 & $\{3\}$ \\
40 & 37338 & 432937 & \textbf{1051351} & 765758 & 340359 & 9 & $\{3\}$ \\
41 & 44583 & 530383 & \textbf{1309702} & 973710 & 444141 & 9 & $\{3\}$ \\
42 & 53174 & 648245 & \textbf{1626702} & 1233465 & 576928 & 9 & $\{3\}$ \\
43 & 63261 & 790274 & \textbf{2014671} & 1557354 & 746198 & 9 & $\{3\}$ \\
44 & 75175 & 961310 & \textbf{2488330} & 1959523 & 961141 & 9 & $\{3\}$ \\
45 & 89134 & 1166600 & \textbf{3065221} & 2457998 & 1233137 & 9 & $\{3\}$ \\
46 & 105558 & 1412811 & \textbf{3766255} & 3073550 & 1576126 & 10 & $\{3\}$ \\
47 & 124754 & 1707235 & \textbf{4616240} & 3832301 & 2007262 & 10 & $\{3\}$ \\
48 & 147273 & 2059004 & \textbf{5644632} & 4764468 & 2547446 & 10 & $\{3\}$ \\
49 & 173525 & 2478182 & \textbf{6886301} & 5907638 & 3222270 & 10 & $\{3\}$ \\
50 & 204226 & 2977224 & \textbf{8382479} & 7305371 & 4062793 & 10 & $\{3\}$ \\
51 & 239943 & 3569927 & \textbf{10181818} & 9011367 & 5106821 & 10 & $\{3\}$ \\
52 & 281589 & 4273195 & \textbf{12341707} & 11087943 & 6400072 & 10 & $\{3\}$ \\
53 & 329931 & 5105841 & \textbf{14929630} & 13611274 & 7997939 & 10 & $\{3\}$ \\
54 & 386155 & 6090698 & \textbf{18024940} & 16669854 & 9967132 & 10 & $\{3\}$ \\
55 & 451276 & 7253275 & \textbf{21720743} & 20371070 & 12388135 & 10 & $\{3\}$ \\
56 & 526823 & 8624287 & \textbf{26126169} & 24839688 & 15357540 & 11 & $\{3\}$ \\
57 & 614154 & 10238140 & \textbf{31368913} & 30226176 & 18991375 & 11 & $\{3\}$ \\
58 & 715220 & 12135975 & \textbf{37598305} & 36705325 & 23428441 & 11 & $\{3\}$ \\
59 & 831820 & 14363982 & \textbf{44988596} & 44486708 & 28834842 & 11 & $\{3\}$ \\
60 & 966467 & 16977037 & 53743028 & \textbf{53813706} & 35408595 & 11 & $\{4\}$ \\
\end{longtable}

In the computed low-dimensional clique data, the dominant displayed clique size follows the pattern
\[
L_{\le 5}(n)=\{1\}\quad (1\le n\le 3),\qquad L_{\le 5}(4)=\{1,2\},
\]
\[
L_{\le 5}(n)=\{2\}\quad (5\le n\le 12),\qquad L_{\le 5}(n)=\{3\}\quad (13\le n\le 54).
\]
The extended low-dimensional data continue the triangle-dominant regime through $n=59$, but at $n=60$ the inequality reverses:
\[
c_3(60)=53743028,\qquad c_4(60)=53813706.
\]
Moreover, the higher clique counts at $n=60$ are
\[
c_5(60)=35408595,\quad c_6(60)=15387845,\quad c_7(60)=4318590,
\]
\[
c_8(60)=739742,\quad c_9(60)=68940,\quad c_{10}(60)=2712,\quad c_{11}(60)=20,
\]
Thus $c_4$ overtakes $c_3$ for the first time at $n=60$; moreover, at that endpoint tetrahedra are the global leader among clique sizes.

Figure~\ref{fig:clique-log-all} shows the growth of the low-dimensional clique counts on a logarithmic scale, while Figure~\ref{fig:c3-c4-crossover} isolates the first triangle--tetrahedron crossover.

\begin{figure}[ht]
\centering
\includegraphics[width=0.92\textwidth]{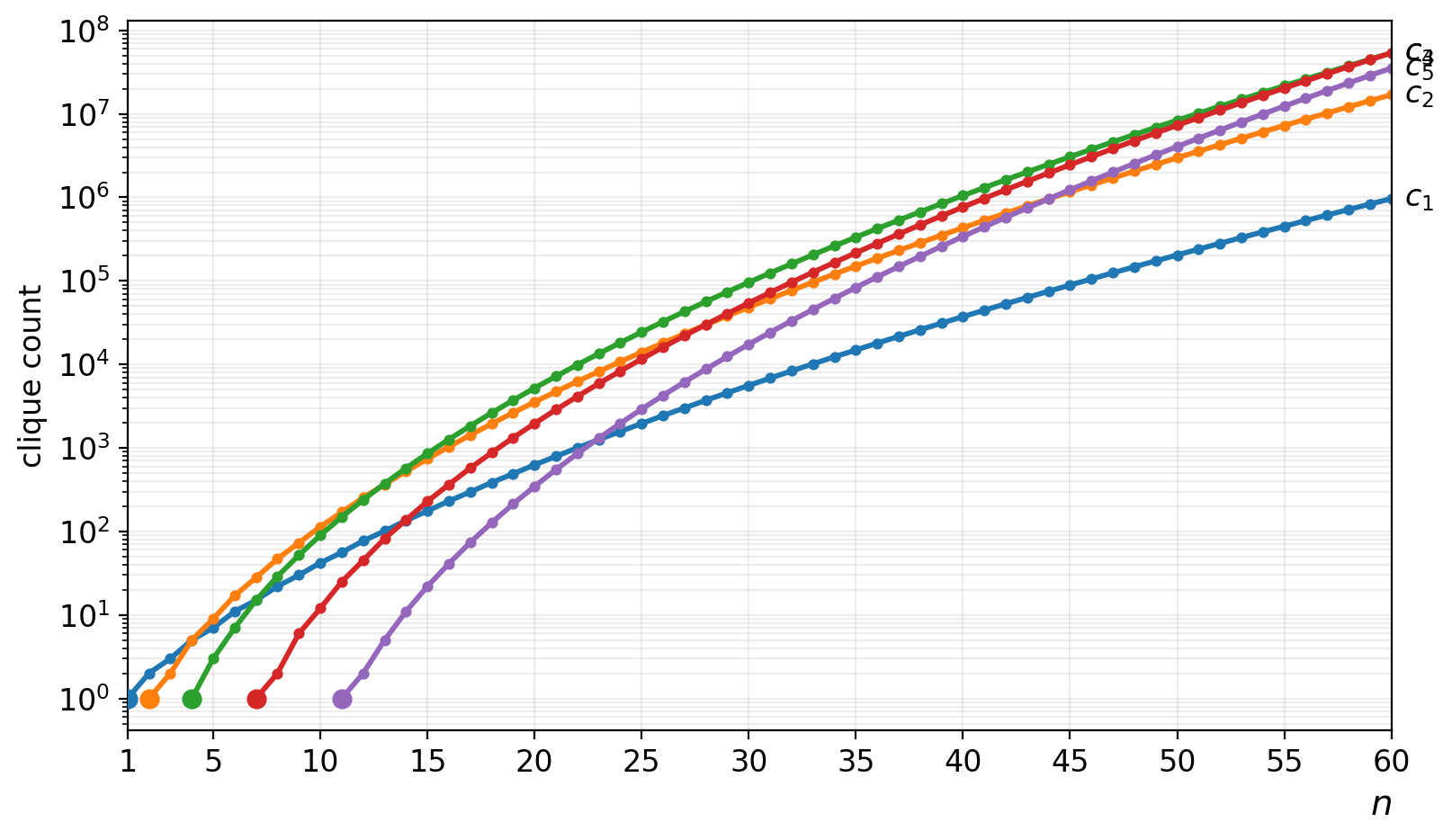}
\caption{Logarithmic plot of the low-dimensional clique counts $c_1(n),\dots,c_5(n)$ for $1\le n\le 60$. Each curve is drawn starting from its first positive value, and the first positive point is marked explicitly, so the logarithmic scale does not create artificial vertical segments from initial zeros.}
\label{fig:clique-log-all}
\end{figure}

\begin{figure}[ht]
\centering
\includegraphics[width=0.88\textwidth]{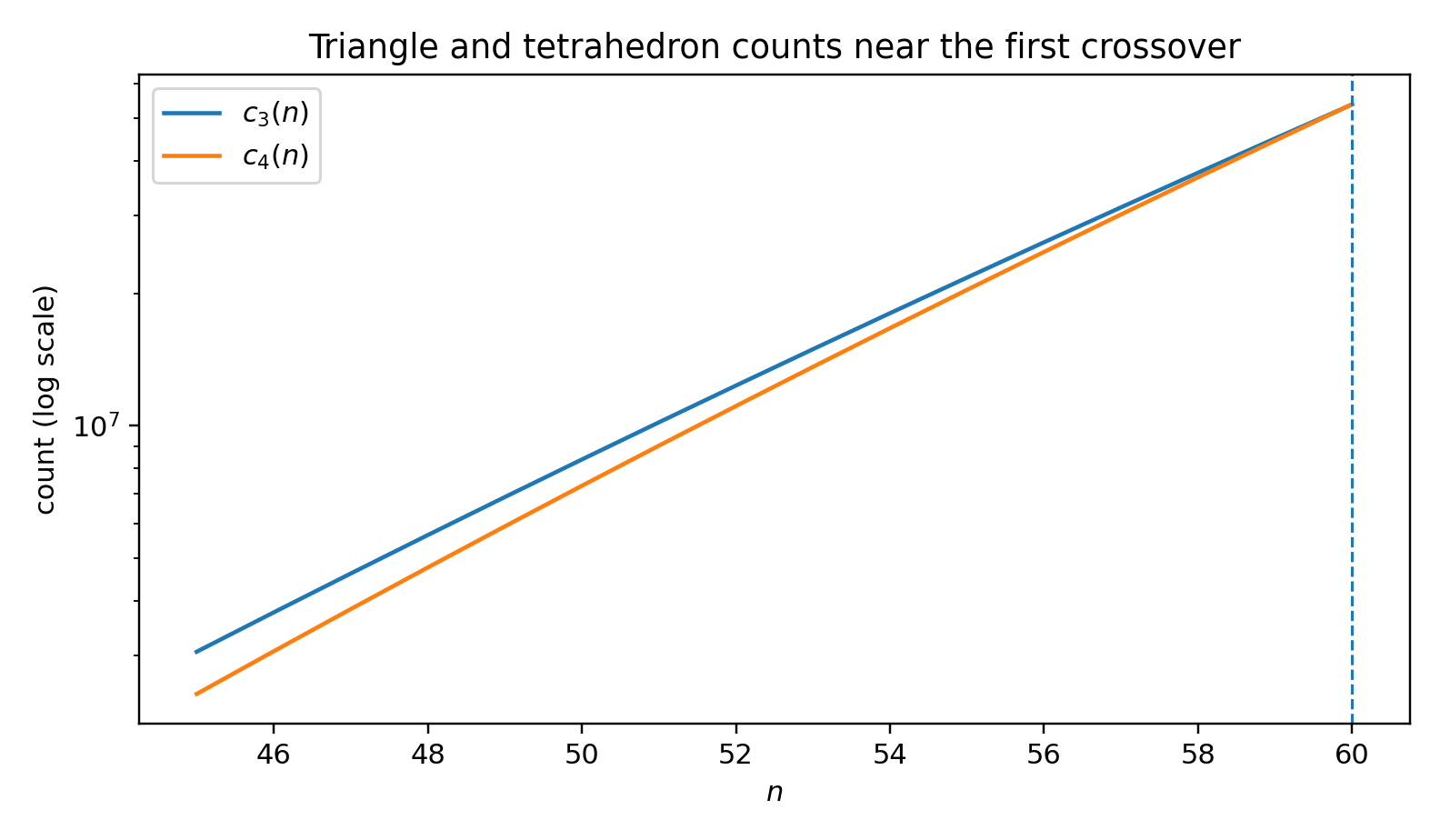}
\caption{Logarithmic plot of $c_3(n)$ and $c_4(n)$ for $45\le n\le 60$, showing the first crossover at $n=60$.}
\label{fig:c3-c4-crossover}
\end{figure}

\subsection{Maximal-simplex and cover data}

A second numerical layer is formed by the counts attached to the star/top container structure. On the cover side, the distinct counts are
\[
\nu_{\star}(n),\qquad \nu_{\top}(n),\qquad \nu_C(n)=|C_n|,
\]
where $\nu_C(n)$ is the number of vertices of the nerve $N_n$.

On the intrinsic maximal-simplex side, the basic counts are
\[
m_{\star}(n),\qquad m_{\top}(n),\qquad m_e(n),
\]
and, if desired, the total count
\[
m_{\max}(n)
\]
of distinct maximal simplices of $K_n$.

Table~\ref{tab:cover-max-counts} records these distinct counts for $1\le n\le 25$. Three points deserve emphasis here. First, $\nu_C(n)$ must be computed after deduplication of coincident full star- and full top-simplices; it is not a priori equal to $\nu_{\star}(n)+\nu_{\top}(n)$. Second, $m_{\max}(n)$ is computed independently as the number of distinct maximal simplices obtained after deduplicating the union of all maximal star-, top-, and edge-presentations. In the computed range, the resulting values agree with $m_{\star}(n)+m_{\top}(n)+m_e(n)$. Third, the residual maximal-edge count is observed to satisfy
\[
m_e(2)=1,
\qquad
m_e(n)=2 \quad (3\le n\le 25)
\]
in the present data set; whether this pattern continues for all $n\ge 3$ is left open here.

\begin{table}[ht]
\centering
\footnotesize
\renewcommand{\arraystretch}{1.08}
\setlength{\tabcolsep}{4pt}
\begin{tabular}{rrrrrrrr}
\toprule
$n$ & $\nu_{\star}$ & $\nu_{\top}$ & $\nu_C$ & $m_{\star}$ & $m_{\top}$ & $m_e$ & $m_{\max}$ \\
\midrule
1 & 1 & 1 & 1 & 0 & 0 & 0 & 1 \\
2 & 1 & 3 & 3 & 0 & 0 & 1 & 1 \\
3 & 2 & 5 & 5 & 0 & 0 & 2 & 2 \\
4 & 3 & 7 & 8 & 1 & 0 & 2 & 3 \\
5 & 5 & 11 & 14 & 2 & 1 & 2 & 5 \\
6 & 7 & 15 & 20 & 5 & 2 & 2 & 9 \\
7 & 11 & 22 & 31 & 7 & 5 & 2 & 14 \\
8 & 15 & 30 & 43 & 13 & 10 & 2 & 25 \\
9 & 22 & 42 & 62 & 18 & 16 & 2 & 36 \\
10 & 30 & 56 & 84 & 27 & 27 & 2 & 56 \\
11 & 42 & 77 & 117 & 38 & 42 & 2 & 82 \\
12 & 56 & 101 & 155 & 54 & 62 & 2 & 118 \\
13 & 77 & 135 & 210 & 71 & 87 & 2 & 160 \\
14 & 101 & 176 & 275 & 99 & 128 & 2 & 229 \\
15 & 135 & 231 & 364 & 131 & 171 & 2 & 304 \\
16 & 176 & 297 & 471 & 172 & 236 & 2 & 410 \\
17 & 231 & 385 & 614 & 226 & 311 & 2 & 539 \\
18 & 297 & 490 & 785 & 295 & 417 & 2 & 714 \\
19 & 385 & 627 & 1010 & 379 & 540 & 2 & 921 \\
20 & 490 & 792 & 1280 & 488 & 706 & 2 & 1196 \\
21 & 627 & 1002 & 1627 & 621 & 896 & 2 & 1519 \\
22 & 792 & 1255 & 2045 & 788 & 1156 & 2 & 1946 \\
23 & 1002 & 1575 & 2575 & 998 & 1455 & 2 & 2455 \\
24 & 1255 & 1958 & 3211 & 1253 & 1846 & 2 & 3101 \\
25 & 1575 & 2436 & 4009 & 1567 & 2296 & 2 & 3865 \\
\bottomrule
\end{tabular}
\caption{Distinct cover and maximal-simplex counts for $1\le n\le 25$. Here $\nu_C(n)=|C_n|$ is the number of distinct full star/top cover elements, while $m_{\max}(n)$ is the total number of distinct maximal simplices of $K_n$.}
\label{tab:cover-max-counts}
\end{table}

For algorithmic purposes one may also record the auxiliary based counts
\[
m_{\star}^{\mathrm{based}}(n),\qquad
m_{\top}^{\mathrm{based}}(n),\qquad
m_e^{\mathrm{based}}(n),
\]
obtained before global deduplication of coincident simplices. These are not intrinsic invariants of $K_n$; they belong instead to the local presentation layer of the computation. Table~\ref{tab:based-max-counts} records their values for $1\le n\le 25$.

\begin{table}[ht]
\centering
\small
\renewcommand{\arraystretch}{1.08}
\setlength{\tabcolsep}{6pt}
\begin{tabular}{rrrr}
\toprule
$n$ & $m_{\star}^{\mathrm{based}}$ & $m_{\top}^{\mathrm{based}}$ & $m_{e}^{\mathrm{based}}$ \\
\midrule
1 & 0 & 0 & 0 \\
2 & 0 & 0 & 2 \\
3 & 0 & 0 & 4 \\
4 & 3 & 0 & 4 \\
5 & 6 & 3 & 4 \\
6 & 15 & 6 & 4 \\
7 & 22 & 15 & 4 \\
8 & 41 & 30 & 4 \\
9 & 59 & 49 & 4 \\
10 & 91 & 83 & 4 \\
11 & 131 & 131 & 4 \\
12 & 191 & 196 & 4 \\
13 & 260 & 281 & 4 \\
14 & 369 & 416 & 4 \\
15 & 500 & 569 & 4 \\
16 & 676 & 795 & 4 \\
17 & 905 & 1070 & 4 \\
18 & 1208 & 1453 & 4 \\
19 & 1585 & 1919 & 4 \\
20 & 2083 & 2546 & 4 \\
21 & 2702 & 3298 & 4 \\
22 & 3498 & 4312 & 4 \\
23 & 4500 & 5531 & 4 \\
24 & 5759 & 7117 & 4 \\
25 & 7322 & 9020 & 4 \\
\bottomrule
\end{tabular}
\caption{Based maximal-simplex counts for $1\le n\le 25$, before global deduplication of coincident vertex sets.}
\label{tab:based-max-counts}
\end{table}

The discrepancy between Tables~\ref{tab:cover-max-counts} and~\ref{tab:based-max-counts} illustrates the bookkeeping issue emphasized earlier: local presentations are plentiful, but the passage to distinct global simplices involves substantial identification.

\subsection{Exact computational pipelines}

The numerical invariants above may be computed by several exact finite methods.

The first is direct clique enumeration in the graph $G_n$. This produces the counts $c_r(n)$ and hence the Euler characteristics $\chi_n$ by alternating summation.

The second is the maximal-container method from Section~4: enumerate the local admissibility data $A_{\max}(\lambda,c)$ and $C_{\max}(\lambda,a)$, construct the corresponding full star- and full top-simplices, enumerate the resulting local presentation counts, and deduplicate globally.

The third is nerve-side enumeration: compute the canonical cover by distinct full star- and full top-simplices, form the nerve $N_n$, and use the identity
\[
\chi_n=\chi(N_n).
\]

The present tables were generated by exact finite enumeration. For each $n\le 25$ we generated the vertex set of $G_n$ from the integer partitions of $n$, constructed the edge set exactly from admissible unit transfers, enumerated the cliques of $G_n$ directly, constructed the full star- and full top-simplices from the local admissibility data, deduplicated coincident vertex sets to obtain the distinct cover and maximal-simplex counts, and computed the nerve of the resulting canonical cover. Thus the clique tables come from direct clique enumeration in $G_n$, while the maximal-simplex and cover tables come from exact local star/top enumeration with global deduplication.

For each $n\le 25$, the reported value of $\chi_n$ was checked both by the alternating clique sum and by the nerve-side computation $\chi(K_n)=\chi(N_n)$. Likewise, $m_{\max}(n)$ was obtained by deduplicating the union of all maximal presentations, while the typed counts $m_{\star}(n)$, $m_{\top}(n)$, and $m_e(n)$ were recorded separately after type-wise deduplication.

For the extended clique table through $n=60$, the low-dimensional counts $c_1(n),\dots,c_5(n)$ were computed exactly by finite clique enumeration in the partition graph. In addition, the full clique profile at $n=60$, namely $c_6(60),\dots,c_{11}(60)$ together with the already displayed values $c_1(60),\dots,c_5(60)$, was computed in the same way in order to verify the global leader at that endpoint.

The machine-readable ancillary file accompanying this version contains the exact numerical rows reported in the tables, the extended low-dimensional clique data $c_1(n),\dots,c_5(n)$ through $n=60$, and the full clique profile at $n=60$. It also contains the values used to produce Figures~\ref{fig:clique-log-all} and~\ref{fig:c3-c4-crossover}.

In particular, every numerical claim in Section~5 is intended to be auditable from the manuscript together with this ancillary file, without any appeal to floating-point approximation or heuristic estimation. Agreement among the independent exact pipelines described above provides a strong internal consistency check on the reported data.

\subsection{Normalization conventions for sequence data}

If the resulting sequences are later compared with external databases or submitted to OEIS, several normalization conventions must remain fixed.

First, the indexing is
\[
n\ge 1.
\]
No value at $n=0$ is used in this paper.

Second, the sequences
\[
(\chi_n)_{n\ge 1}
\qquad\text{and}\qquad
(b_n)_{n\ge 1}
\]
must be kept separate. They differ only by the constant shift $1$, but they have different natural meanings.

Third, clique-size indexing must not be confused with simplex-dimension indexing. Thus $c_4(n)$ and $f_3(n)$ denote the same numerical sequence, but under two different grading conventions.

Fourth, distinct maximal-simplex counts must not be conflated with based counts. The former are intrinsic global quantities, whereas the latter belong to the pre-deduplication stage of the computation.

Finally, counts of maximal simplices should be kept separate from counts of cover elements in the canonical cover and from counts of arbitrary cliques. These layers are related, but they are not interchangeable.

\section{Conclusion and outlook}

In this paper we studied the clique complex
\[
K_n=\Cl(G_n)
\]
of the partition graph from the numerical side, taking as input the previously established homotopy classification
\[
K_n \simeq \bigvee^{\,b_n} S^2.
\]

The paper yields a consistent counting framework. It fixes a dictionary relating clique counts, simplex counts, Euler characteristic, reduced Euler characteristic, and the sequence $b_n$. It also isolates two counting languages for the same invariant,
\[
\chi(K_n)=\sum_{r\ge 1}(-1)^{r-1}c_r(n)
\]
and
\[
\chi(K_n)=\chi(N_n),
\]
where $N_n$ is the nerve of the canonical cover by full star- and full top-simplices. In addition, we organized the maximal-simplex layer into three distinct counting regimes: cover-side counts, intrinsic counts of genuine maximal simplices, and auxiliary based counts prior to deduplication.

On the computational side, the extended clique data show that the first $c_3$-to-$c_4$ crossover occurs at $n=60$ and that, at this endpoint, tetrahedra are the global leader among clique sizes.

This leads to a natural family of integer sequences attached to $K_n$. Its primary topological layer is formed by
\[
\chi_n=\chi(K_n)
\qquad\text{and}\qquad
b_n=\chi_n-1.
\]
A second layer is formed by the fixed-$r$ clique-count sequences $(c_r(n))_{n\ge 1}$. A third layer consists of the distinct maximal-simplex counts
\[
m_{\star}(n),\qquad m_{\top}(n),\qquad m_e(n),
\]
together, when desired, with the total count $m_{\max}(n)$ of distinct maximal simplices. A fourth cover-side layer is given by
\[
\nu_{\star}(n),\qquad \nu_{\top}(n),\qquad \nu_C(n)=|C_n|.
\]
The auxiliary based counts belong to the computational presentation layer and should be kept separate from these intrinsic sequences. Tables~\ref{tab:chi-bn-25}--\ref{tab:based-max-counts} record exact data for these layers over the range $1\le n\le 25$, while Table~\ref{tab:clique-counts-1-60-summary} and Figures~\ref{fig:clique-log-all}--\ref{fig:c3-c4-crossover} extend the low-dimensional clique-count layer through $n=60$.

We do not claim closed formulas here for $\chi_n$ or for the general clique-count family $c_r(n)$. Instead, the paper provides a framework in which those questions can be posed cleanly and attacked by finite computation, local-to-global enumeration, or nerve-side counting.

\subsection{OEIS-oriented outlook}

The numerical sequences arising from the complexes $K_n$ are natural candidates for future OEIS comparison and, where appropriate, submission. The most immediate examples are:

\begin{enumerate}
\item the Euler-characteristic sequence $(\chi_n)_{n\ge 1}$;
\item the wedge-multiplicity sequence $(b_n)_{n\ge 1}$, with $b_n=\chi_n-1$;
\item the fixed-$r$ clique-count sequences $(c_r(n))_{n\ge 1}$ for fixed $r$;
\item the maximal-simplex sequences $(m_{\star}(n))_{n\ge 1}$, $(m_{\top}(n))_{n\ge 1}$, and $(m_e(n))_{n\ge 1}$.
\end{enumerate}

For OEIS purposes, several normalization issues are essential. The indexing must be fixed explicitly, the distinction between $\chi_n$ and $b_n$ must be preserved, clique-size indexing must not be confused with simplex-dimension indexing, and auxiliary based counts must not be conflated with intrinsic distinct counts. Only after these conventions are fixed does sequence comparison become reliable.

\subsection{Open problems}

\begin{problem}
Find exact formulas, recurrences, or effective structural reductions for the Euler characteristic sequence
\[
\chi_n=\chi(K_n)
\]
and equivalently for
\[
b_n=\chi_n-1.
\]
\end{problem}

\begin{problem}
For fixed $r\ge 2$, study the clique-count sequence
\[
c_r(n),
\]
including exact computation, possible recurrences, and asymptotic growth. Even the first nontrivial cases $r=2,3,4$ already appear natural.
\end{problem}

\begin{problem}
Understand the passage from based counts to distinct counts in a more structural way. In particular, determine when different local presentations yield the same full star- or full top-simplex, and whether the resulting deduplication admits a conceptual classification.
\end{problem}

\begin{problem}
Compare systematically the three exact counting pipelines: direct clique enumeration in $G_n$, maximal-container enumeration via local admissibility data, and nerve-side enumeration through the canonical cover. Determine which invariants are most efficiently accessible in each framework.
\end{problem}

\begin{problem}
Study the global behaviour of the resulting sequences: monotonicity and non-monotonicity phenomena, first and second differences, growth rates, and possible regularities or oscillations across $n$. At present such questions are primarily empirical unless supported by proof.
\end{problem}

\section*{Acknowledgements}

The author acknowledges the use of ChatGPT (OpenAI) for discussion, structural planning, and editorial assistance during the preparation of this manuscript. All mathematical statements, proofs, computations, and final wording were checked and approved by the author, who takes full responsibility for the contents of the paper.

\end{document}